\newtheorem{theorem}{Theorem}[section]
\newtheorem{corollary}[theorem]{Corollary}
\newtheorem{lemma}[theorem]{Lemma}
\theoremstyle{definition}
\numberwithin{equation}{section}
\begin{document}

%%%%% To ease editing, for IMPAN journals add:

\baselineskip=17pt

%%%%%%%%%%%%%%%%

\title{On the  subgroup of $B_4$ that contains the kernel of Burau representation }

\author{A. Beridze, L. Davitadze}

\date{}

\maketitle

%% Classification and key words; note that the 2010 classification is used:

\renewcommand{\thefootnote}{}

\footnote{2020 \emph{Mathematics Subject Classification}: 20G05}

\footnote{\emph{Key words and phrases}: Burau representation }

\renewcommand{\thefootnote}{\arabic{footnote}}
\setcounter{footnote}{0}

%%%%%%%%

\begin{abstract}
 It is known that there are braids $\alpha$ and $\beta$  in the braid group $B_4$,  such that  the group $\langle \alpha, \beta \rangle$ is a fee subgroup \cite{7}, which contains the kernel $K$ of the Burau map $\rho_4 : B_4 \to G	L\left(3, \mathbb{Z}[t,t^{-1}]\right)$ \cite{6}, \cite{4}.  In this paper we will prove that  $K$ is subgroup of $G=\langle \tau, \Delta \rangle $, where $\tau $ and $\Delta $ are fourth and square roots of the generator $\theta$ of the center $Z$ of the group $B_4$.  Consequently, we will write  elements of $K$ in terms of $\tau^i,~~i=1,2,3$ and $\Delta$. Moreover, we will show that the quotient group $G/Z$ is isomorphic to the free product $Z_4 *Z_2$.  
\end{abstract}

\section{Introduction}
Let $ \rho_4 :B_4\to GL\left(3,\mathbb{Z}\left[t,t^{-1}\right]\right)$
be  the reduced Burau representation of the braid group $B_4$. 
Consider the matrices $A=\rho_4(\alpha)$ and $B=\rho_4(\beta)$, where $\alpha={\sigma }_1{\sigma }_2{\sigma }^{-1}_3{\sigma }_1{\sigma }_2^{-1}{\sigma }_1^{-1}$ and $\beta={\sigma }_3{\sigma }^{-1}_1$ ($\sigma_i, ~i=1,2,3$ are standard generators of $B_4$). It is known that the group $\langle \alpha, \beta\rangle$ generated by $\alpha$ and $\beta$ is a free group, which contains the kernel of the Burau map $\rho_4 : B_4 \to G
L\left(3, \mathbb{Z}[t,t^{-1}]\right)$ \cite{1}, \cite{4}, \cite{6}, \cite{7}. Note that in \cite[Theorem 3.19]{1} it is shown the the kernel of the Burau map $\rho_4$ is subgroup of the free group $\langle X, Y \rangle$ of rank 2, where $X=\sigma_3 \sigma_1^{-1}$ and $Y=\sigma_2 \sigma_3 \sigma_1^{-1}\sigma_2^{-1}.$ On the other hand, since $\beta=X$ and $\alpha =  Y^{-1} \cdot X$, we have  $\langle X,Y \rangle =\langle \alpha, \beta\rangle$.

In the paper \cite {12} it is shown that there exists an order four matrix $T$, which satisfies the following equality:
\begin{equation} \label{1}
	A =T B T^{-1}, ~~~ A^{-1}=T^{-1}BT, ~~~ B^{-1}=T^2 BT^2.
\end{equation}
Using of these relations, we have shown that the braid $\tau =\sigma_1 \sigma_2 \sigma_3 \in B_4$ has the properties
\begin{equation} \label{2}
	\alpha =\tau^{-1}  \beta  \tau, ~~~ \alpha^{-1}=\tau\beta \tau^{-1}, ~~~ \beta^{-1}=\tau^2 \beta \tau^{-2}.
\end{equation}
On the other hand , $\beta = \Delta^{-1} \tau^2$, where  $\Delta =\sigma_1 \sigma_2 \sigma_3 \sigma_1 \sigma_2 \sigma_1$.  Consequently, we will obtain that  an element of $K$ have the form
\begin{equation} \label{3}
	\omega = \theta^m \tau^2   \Delta\tau^{i_1}  \Delta \tau^{i_2} \dots \tau^{i_k}\Delta\tau^2, ~ m \in \mathbb{Z}, i_i\in \{1,2,3\}.
\end{equation}
where $\theta$ is the generator of the center $Z$ of the group $B_4$.  Since $\rho_4(\theta)=t^4 I$, if $\bar{T}=\rho_4(\tau)$ and $D=\rho_4(\delta)$, then the Burau representation for $n=4$ is  faithful if and only if the product of the matrices of the form
\begin{equation} \label{4}
	t^{4m} \bar{T}^2  D \bar{T}^{i_1}  D \bar{T}^{i_2} \dots \bar{T}^{i_k} D\bar{T}^2 , ~ m \in \mathbb{Z}, i_i\in \{1,2,3\}
\end{equation}
is not the identity matrix.

\section{Subgroup generated by $\tau$ and $\Delta$}

Let $\sigma_1$, $\sigma_2$ and $\sigma_3$ be the standard generators of the braid group $B_4$. Consider the corresponding Burau matrices:
\begin{equation} \label{5}
	\small
	\rho_4 \left({\sigma }_1\right)=\left[\begin{array}{ccc}
		-t & t & 0 \\ 
		0 & 1 & 0 \\ 
		0 & 0 & 1 \end{array}
	\right], 
	\rho_4 \left({\sigma }_2\right)=\left[\begin{array}{ccc}
		1 & 0& 0 \\ 
		1 & -t & t \\ 
		0 & 0 & 1 \end{array}
	\right], 
	\rho_4 \left({\sigma }_3\right)=\left[\begin{array}{ccc}
		1 & 0 & 0 \\ 
		0 & 1 & 0 \\ 
		0 & 1 & -t \end{array}
	\right].
\end{equation}
Note that there is a minor different from standard matrices considered in \cite{1} and following \cite{3} we use left multiplication of matrices. 

In the paper \cite[Lemma 2.1]{3} is shown that for  $\alpha={\sigma }_1{\sigma }_2{\sigma }^{-1}_3{\sigma }_1{\sigma }_2^{-1}{\sigma }_1^{-1}$ and $\beta={\sigma }_3{\sigma }^{-1}_1$ and corresponding matrices
\begin{equation} \label{6}
	A=\rho_4 \left(\alpha\right)=\left[ \begin{array}{ccc}
		0 & 0 & -t^{-1} \\ 
		0 & -t & -t^{-1}+t \\ 
		-1 & 0 & -t^{-1}+1 \end{array}
	\right],
\end{equation}
\begin{equation} \label{7}
	~~~~  B=\rho_4 \left(\beta\right)=\left[ \begin{array}{ccc}
		-t^{-1} & 1 & 0 \\ 
		0 & 1 & 0 \\ 
		0 & 1 & -t \end{array}
	\right],
\end{equation}
there is an order four matrix 
\begin{equation} \label{8}
	T=\left[ \begin{array}{ccc}
		-1 & 1 & 0\\ 
		-1& 0 & 1 \\ 
		-1 & 0 & 0 \end{array}
	\right]
\end{equation}
such that
\begin{equation} \label{9}
	A =T B T^{-1}, ~~~ A^{-1}=T^{-1}BT, ~~~ B^{-1}=T^2 BT^2.
\end{equation}
Note that,  since we use {\bf the left multiplication of matrices}, using  \eqref{9} we obtain the following: 
\begin{lemma}\label{l.2.1} The braid $\tau =\sigma_1 \sigma_2 \sigma_3$ (see Figure 1) is  the element of $B_4$, such that 
	\begin{equation} \label{10}
		\bar{T}=\rho_4(\tau)=\left[ \begin{array}{ccc}
			-t & t & 0\\ 
			-t& 0 & t \\ 
			-t & 0 & 0 \end{array}
		\right]
	\end{equation}
	and the following condition is fulfilled:
	\begin{equation} \label{11}
		\alpha =\tau^{-1} \beta \tau, ~~~ \alpha^{-1}=\tau \beta \tau^{-1}, ~~~ \beta^{-1}=\tau^2 \beta \tau^{-2}.
	\end{equation} 
\end{lemma}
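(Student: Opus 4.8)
The plan is to prove Lemma~\ref{l.2.1} in two steps: first compute $\bar T=\rho(\tau)$ by multiplying out the matrices \eqref{5}, and then verify the three relations \eqref{11} directly inside $B_4$ from the Artin presentation, using \eqref{9} only as a guide to what should be true.

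For the first step: in the left-multiplication convention used here $\rho$ assigns to a word the product of the matrices \eqref{5} taken in the reverse order, so $\bar T=\rho(\sigma_1\sigma_2\sigma_3)=\rho(\sigma_3)\rho(\sigma_2)\rho(\sigma_1)$, and carrying out this $3\times3$ product gives exactly \eqref{10}. The key observation to record is that $\bar T=t\,T$, with $T$ the matrix of \eqref{8}; since $tI$ is a central scalar, conjugation by $\bar T^{k}$ agrees with conjugation by $T^{k}$ for all $k$, and $T^{4}=I$, so \eqref{9} becomes $A=\bar T B\bar T^{-1}$, $A^{-1}=\bar T^{-1}B\bar T$, $B^{-1}=\bar T^{2}B\bar T^{-2}$. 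Because $\rho$ reverses the order of a word, these say that $\rho$ sends $\tau^{-1}\beta\tau$, $\tau\beta\tau^{-1}$, $\tau^{2}\beta\tau^{-2}$ to $A$, $A^{-1}$, $B^{-1}$ respectively --- which is what \eqref{11} predicts, but a priori only at the level of Burau images.

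For \eqref{11} as an identity in $B_4$ one cannot appeal to \eqref{9}, since faithfulness of $\rho_4$ for $n=4$ is precisely the open problem the paper is about; the relations must be checked by word manipulation. The first relation is equivalent to $\tau\alpha=\beta\tau$, and substituting the defining words for $\tau,\alpha,\beta$ one finds that the left side reduces to $\sigma_3\sigma_2\sigma_3=\beta\tau$ through a short chain of moves: the commutation $\sigma_1\sigma_3=\sigma_3\sigma_1$, the braid relations $\sigma_1\sigma_2\sigma_1=\sigma_2\sigma_1\sigma_2$ and $\sigma_2\sigma_3\sigma_2=\sigma_3\sigma_2\sigma_3$, and cancellation of inverse pairs. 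The second relation is shorter still: after the obvious cancellations $\tau^{-1}\alpha\tau$ collapses to $\sigma_3^{-1}\sigma_1=\beta^{-1}$ using only $\sigma_1\sigma_3=\sigma_3\sigma_1$, which is exactly $\alpha^{-1}=\tau\beta\tau^{-1}$. The third relation then follows formally from the first two: $\tau^{-2}\beta\tau^{2}=\tau^{-1}(\tau^{-1}\beta\tau)\tau=\tau^{-1}\alpha\tau=\beta^{-1}$, hence $\beta^{-1}=\tau^{2}\beta\tau^{-2}$.

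The obstacles are purely a matter of care. One is keeping the multiplication convention straight so that $\bar T$ comes out as \eqref{10} rather than as the opposite-order product. The more substantive one is the temptation to deduce \eqref{11} from \eqref{9}: the two are not literally the same statement --- e.g.\ the exponent $-2$ in $\beta^{-1}=\tau^{2}\beta\tau^{-2}$ is essential, since $\tau^{4}=\theta$ is a nontrivial central element and so $\tau^{2}\neq\tau^{-2}$, whereas in \eqref{9} one has $T^{2}=T^{-2}$ only because $T$ has order four --- so \eqref{11} genuinely has to be established by the Artin-relation computations above, each of which is only a handful of steps.
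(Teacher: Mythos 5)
Your proposal is correct and follows essentially the same route as the paper: equation \eqref{10} by direct matrix multiplication (with the reverse-order convention, which you handle correctly), and the relations \eqref{11} by word manipulation with the Artin relations rather than by appeal to \eqref{9} -- exactly the point the paper's proof is built on, and your computations $\tau\alpha=\sigma_3\sigma_2\sigma_3=\beta\tau$ and $\tau^{-1}\alpha\tau=\sigma_3^{-1}\sigma_1=\beta^{-1}$ check out. The only difference is that you deduce Case 3 ($\beta^{-1}=\tau^{2}\beta\tau^{-2}$) formally from the first two relations instead of reducing the fourteen-letter word as the paper does, which is a modest but genuine simplification.
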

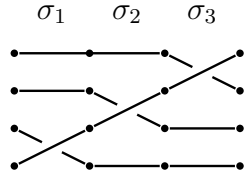
\begin{figure}[hbt!]
	\centering
	\begin{tikzpicture}[line width=.9pt, outer sep=0.5pt]
		\foreach \x in {0,1,2,3}
		\foreach \y in {0,1,2,3}
		\draw (1*\x,0.5*\y) node (\x\y)[circle,fill,inner sep=1pt]{};
		\draw  (02)--(12)--(21)--(31);
		\draw  (01)--(10)--(20)--(30);
		\draw  (03)--(23)--(32);
		\node at (0.5,0.25)[circle,fill=white,inner sep=3pt]{};
		\node at (1.5,0.75)[circle,fill=white,inner sep=3pt]{};
		\node at (2.5,1.25)[circle,fill=white,inner sep=3pt]{};
		\draw (00) -- (11)--(22)--(33);
		
		\node at (0.5,2.02){ {$\sigma_1$}};
		\node at (1.5,2.02){ {$\sigma_2$}};
		\node at (2.5,2.02){ {$\sigma_3$}};
		
	\end{tikzpicture}\\
	\caption{$\tau=\sigma_1\sigma_2\sigma_3$}
	\label{figure 1}
\end{figure}
\begin{proof} The equation \eqref{10} can be obtain by direct calculation. For  \eqref{11} we will use the following relations:
	\begin{equation} \label{12.1}
		\sigma^{-1}_{i+1}\sigma^{-1}_i\sigma_{i+1}=\sigma_{i}\sigma^{-1}_{i+1}\sigma^{-1}_{i},~~\sigma_{i+1}\sigma^{-1}_i\sigma^{-1}_{i+1}=\sigma^{-1}_{i}\sigma^{-1}_{i+1}\sigma_{i+1}
	\end{equation}
	\begin{equation} \label{13.1}
		\sigma^{-1}_{i+1}\sigma_i\sigma_{i+1}=\sigma_{i}\sigma_{i+1}\sigma^{-1}_{i},~~\sigma_{i+1}\sigma_i\sigma^{-1}_{i+1}=\sigma^{-1}_{i}\sigma_{i+1}\sigma_{i+1}.
	\end{equation}
	{\bf Case 1.}  $\alpha =\tau^{-1} \beta \tau$ (see Figure 2 and Figure 3):
	\[\tau^{-1}\beta\tau=(\sigma_3^{-1}\sigma_2^{-1}\sigma_1^{-1})(\sigma_3\sigma_1^{-1})(\sigma_1\sigma_2\sigma_3)=\sigma_3^{-1}\sigma_2^{-1}\sigma_1^{-1}\sigma_3\sigma_2\sigma_3=\]
	\[\sigma_3^{-1}\sigma_2^{-1}\sigma_1^{-1}\sigma_2\sigma_3\sigma_2=
	\sigma_3^{-1}\sigma_1\sigma_2^{-1}\sigma_1^{-1}\sigma_3\sigma_2=\sigma_1\sigma_3^{-1}\sigma_2^{-1}\sigma_3\sigma_1^{-1}\sigma_2=\]
	\begin{equation} \label{14.1}
		\sigma_1\sigma_2\sigma_3^{-1}\sigma_2^{-1}\sigma_1^{-1}\sigma_2=\sigma_1\sigma_2\sigma_3^{-1}\sigma_1\sigma_2^{-1}\sigma_1^{-1}=\alpha.
	\end{equation}
	
	\begin{figure}[hbt!]
		\centering
		\begin{tikzpicture}[line width=.9pt, outer sep=1pt]
			\foreach \x in {0,1,2,3,4,5,6}
			\foreach \y in {0,1,2,3}
			\draw (1*\x,0.5*\y) node (\x\y)[circle,fill,inner sep=1pt]{};
			\draw (02) -- (12)--(21)--(31)--(40)--(50)--(61);
			\node at (1.5,0.75)[circle,fill=white,inner sep=3pt]{};
			\node at (3.5,0.25)[circle,fill=white,inner sep=3pt]{};
			\node at (5.5,0.25)[circle,fill=white,inner sep=3pt]{};
			\draw (01)--(10)--(30)--(41)--(52)--(62);
			\node at (0.5,0.25)[circle,fill=white,inner sep=3pt]{};
			\node at (4.5,0.75) [circle,fill=white,inner sep=3pt]{};
			\draw (00)--(11)--(22)--(33)--(63);
			\node at (2.5,1.25)[circle,fill=white,inner sep=3pt]{};

			\draw (03)--(13)--(23)--(32)--(42)--(51)--(60) ;
			
			\node at (0.5,2){ {$\sigma_1$}};
			\node at (1.5,2){ {$\sigma_2$}};   
			\node at (2.5,2){ {$\sigma_3^{-1}$}};
			\node at (3.5,2){{$\sigma_1$}};
			\node at (4.5,2){ {$\sigma_2^{-1}$}};
			\node at (5.5,2){ {$\sigma_1^{-1}$}};
		\end{tikzpicture}\\
		\caption{$\alpha={\sigma }_1{\sigma }_2{\sigma }^{-1}_3{\sigma }_1{\sigma }_2^{-1}{\sigma }_1^{-1}$}
		\label{figure 1}
	\end{figure}
	
	\begin{figure}[hbt!]
		\centering
		\begin{tikzpicture}[line width=.9pt, outer sep=1pt]
			\foreach \x in {0,1,2,3,4,5,6,7,8}
			\foreach \y in {0,1,2,3}
			\draw (1*\x,0.5*\y) node (\x\y)[circle,fill,inner sep=1pt]{};
			
			\draw [thick] (00)--(20)--(31)--(41)--(50)--(61)--(72)--(83);
			\draw [thick] (01)--(11)--(22)--(32)--(43)--(73)--(82);
			\draw [thick] (02)--(13)--(33)--(42)--(62)--(71)--(81);
			\draw [thick] (03)--(12)--(21)--(30)--(40)--(51)--(60)--(80);
			\node at (0.5,1.25)[circle,fill=white,inner sep=3pt]{};
			\node at (1.5,0.75)[circle,fill=white,inner sep=3pt]{};
			\node at (2.5,0.25)[circle,fill=white,inner sep=3pt]{};
			\node at (3.5,1.25)[circle,fill=white,inner sep=3pt]{};
			\node at (4.5,0.25)[circle,fill=white,inner sep=3pt]{};
			\node at (5.5,0.25)[circle,fill=white,inner sep=3pt]{};
			\node at (6.5,0.75)[circle,fill=white,inner sep=3pt]{};
			\node at (7.5,1.25)[circle,fill=white,inner sep=3pt]{};
			\draw [thick] (03)--(12)--(21)--(30) (32)--(43) (40)--(51) (50)--(61)--(72)--(83);
			
			\node at (4.5,0.25)[circle,fill=white,inner sep=3pt]{};
			\draw [thick]  (41)--(50);
			
			\node at (0.5,1.8){$\sigma_3^{-1}$};
			\node at (1.5,1.8){$\sigma_2^{-1}$};
			\node at (2.5,1.8){$\sigma_1^{-1}$};
			\node at (3.5,1.8){$\sigma_3$};
			\node at (4.5,1.8){$\sigma_1^{-1}$};
			\node at (5.5,1.8){$\sigma_1$};
			\node at (6.5,1.8){$\sigma_2$};
			\node at (7.5,1.8){$\sigma_3$};
			
		\end{tikzpicture}	
		\caption{$\tau^{-1}\beta \tau$}
		\label{figure 1}
	\end{figure}
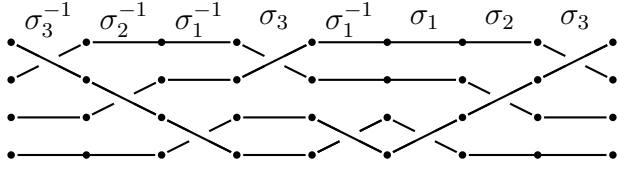
	\newpage
	{\bf Case 2.}  $\alpha^{-1} =\tau\beta \tau^{-1}$ (see Figure 4 and Figure 5):
	\[\tau\beta \tau^{-1}=(\sigma_1\sigma_2\sigma_3)(\sigma_3\sigma_1^{-1})(\sigma_3^{-1}\sigma_2^{-1}\sigma_1^{-1})=\]
	\begin{equation} \label{15.1}
		\sigma_1\sigma_2\sigma_3\sigma_1^{-1}\sigma_2^{-1}\sigma_1^{-1}=\sigma_1\sigma_2\sigma_1^{-1}\sigma_3\sigma_2^{-1}\sigma_1^{-1}=\alpha^{-1}.
	\end{equation}
	
	\begin{figure}[hbt!]
		\centering
		\begin{tikzpicture}[line width=.9pt, outer sep=1pt]
			\foreach \x in {0,1,2,3,4,5,6}
			\foreach \y in {0,1,2,3}
			\draw (1*\x,0.5*\y) node (\x\y)[circle,fill,inner sep=1pt]{};
			\draw (02) -- (12)--(21)--(30)--(40)--(50)--(61);
			\node at (1.5,0.75)[circle,fill=white,inner sep=3pt]{};
			\node at (3.5,0.25)[circle,fill=white,inner sep=3pt]{};
			\node at (5.5,0.25)[circle,fill=white,inner sep=3pt]{};
			\draw (01)--(10)--(20)--(31)--(41)--(52)--(62);
			\node at (0.5,0.25)[circle,fill=white,inner sep=3pt]{};
			\node at (4.5,0.75) [circle,fill=white,inner sep=3pt]{};
			\draw (00)--(11)--(22)--(32)--(43)--(63);
			\node at (2.5,1.25)[circle,fill=white,inner sep=3pt]{};

			\draw (03)--(13)--(33)--(42)--(51)--(60) ;
			\node at (2.5,0.25)[circle,fill=white,inner sep=3pt]{};
			\node at (3.5,1.25)[circle,fill=white,inner sep=3pt]{};
			\draw (21)--(30) (32)--(43);
			\node at (5.5,1.25)[circle,fill=white,inner sep=3pt]{};

			\node at (0.5,2){ {$\sigma_1$}};
			\node at (1.5,2){ {$\sigma_2$}};   
			\node at (2.5,2){ {$\sigma_1^{-1}$}};
			\node at (3.5,2){{$\sigma_3$}};
			\node at (4.5,2){ {$\sigma_2^{-1}$}};
			\node at (5.5,2){ {$\sigma_1^{-1}$}};
		\end{tikzpicture}\\
		\caption{$\alpha^{-1}={\sigma }_1{\sigma }_2{\sigma }^{-1}_1{\sigma }_3{\sigma }_2^{-1}{\sigma }_1^{-1}$}
		\label{figure 1}
	\end{figure}

	\begin{figure}[hbt!]
		\centering
		\begin{tikzpicture}[line width=.9pt, outer sep=1pt]
			\foreach \x in {0,1,2,3,4,5,6,7,8}
			\foreach \y in {0,1,2,3}
			\draw (1*\x,0.5*\y) node (\x\y)[circle,fill,inner sep=1pt]{};
			
			\draw [thick] (01)--(10)--(40)--(51)--(61)--(72)--(82);
			
			\node at (0.5,0.25)[circle,fill=white,inner sep=3pt]{};
			\node at (4.5,0.25)[circle,fill=white,inner sep=3pt]{};
			\node at (6.5,0.75)[circle,fill=white,inner sep=3pt]{};
			
			\draw [thick] (02)--(12)--(21)--(41)--(50)--(70)--(81);
			
			\node at (1.5,0.75)[circle,fill=white,inner sep=3pt]{};
			\node at (7.5,0.25)[circle,fill=white,inner sep=3pt]{};
			
			\draw [thick] (03)--(23)--(32)--(43)--(53)--(62)--(71)--(80);
			\node at (2.5,1.25)[circle,fill=white,inner sep=3pt]{};
			\node at (5.5,1.25)[circle,fill=white,inner sep=3pt]{};
			
			\draw [thick] (00)--(11)--(22)--(33)--(42)--(52)--(63)--(83); 
			
			\node at (3.5,1.25)[circle,fill=white,inner sep=3pt]{};
			
			\draw [thick]  (32)--(43);
			\node at (5.5,1.25)[circle,fill=white,inner sep=3pt]{};
			\draw [thick] (53)--(62);
			
			\node at (0.5,1.8){$\sigma_1$};
			\node at (1.5,1.8){$\sigma_2$};
			\node at (2.5,1.8){$\sigma_3$};
			\node at (3.5,1.8){$\sigma_3$};
			\node at (4.5,1.8){$\sigma_1^{-1}$};
			\node at (5.5,1.8){$\sigma_3^{-1}$};
			\node at (6.5,1.8){$\sigma_2^{-1}$};
			\node at (7.5,1.8){$\sigma_1^{-1}$};
		\end{tikzpicture}\\
		\caption{$\tau \beta \tau^{-1} $}
		\label{figure 1}
	\end{figure}
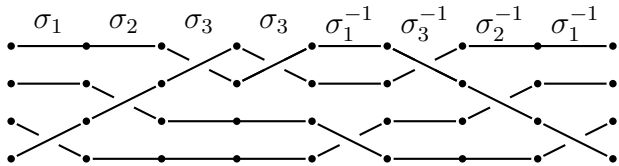
	
	%\newpage
	{\bf Case 3.}  $\beta^{-1} =\tau^2\beta \tau^{-2}$ (see Figure 6 and Figure 7):
	\[\tau^2\beta \tau^{-2}=(\sigma_1\sigma_2\sigma_3\sigma_1\sigma_2\sigma_3)(\sigma_3\sigma_1^{-1})(\sigma_3^{-1}\sigma_2^{-1}\sigma_1^{-1}\sigma_3^{-1}\sigma_2^{-1}\sigma_1^{-1})=\]
	\[\sigma_1\sigma_2\sigma_3\sigma_1\sigma_2\sigma_3\sigma_1^{-1}\sigma_2^{-1}\sigma_1^{-1}\sigma_3^{-1}\sigma_2^{-1}\sigma_1^{-1}=\]
	\[\sigma_1\sigma_2\sigma_3\sigma_1\sigma_2\sigma_1^{-1}\sigma_3\sigma_2^{-1}\sigma_3^{-1}\sigma_1^{-1}\sigma_2^{-1}\sigma_1^{-1}=\]
	\[\sigma_1\sigma_2\sigma_3\sigma_2^{-1}\sigma_1\sigma_2\sigma_2^{-1}\sigma_3^{-1}\sigma_2\sigma_2^{-1}\sigma_1^{-1}\sigma_2^{-1}=\]
	\begin{equation} \label{15.1}
		\sigma_1\sigma_2\sigma_3\sigma_2^{-1}\sigma_3^{-1}\sigma_2^{-1}=
		\sigma_1\sigma_2\sigma_3\sigma_3^{-1}\sigma_2^{-1}\sigma_3^{-1}=
		\sigma_1\sigma_3^{-1}=\beta^{-1}.
	\end{equation}
	
	\begin{figure}[hbt!]
		\centering
		\begin{tikzpicture}[line width=.9pt, outer sep=1pt]
			\foreach \x in {0,1,2}
			\foreach \y in {0,1,2,3}
			\draw (1*\x,0.5*\y) node (\x\y)[circle,fill,inner sep=1pt]{};
			
			\draw [thick] (01)--(10)--(20);
			\draw [thick] (02)--(12)--(23);
			\node at (0.5,0.25) [circle,fill=white,inner sep=3pt]{};
			\node at (1.5,1.25) [circle,fill=white,inner sep=3pt]{};
			
			\draw [thick] (03)--(13)--(22);
			\draw [thick] (00)--(11)--(21);

			\node at (0.5,1.79) {$\sigma_1$};
			\node at (1.5,1.79) {$\sigma_3^{-1}$};   
		\end{tikzpicture}\\
		
		\caption{$\beta^{-1}=\sigma_1\sigma_3^{-1}$}
		\label{figure 1}
	\end{figure}

	\begin{figure}[hbt!]
		\centering	
		\begin{tikzpicture}[line width=.9pt, outer sep=1pt]
			\foreach \x in {0,1,2,3,4,5,6,7,8,9,10,11,12,13,14}
			\foreach \y in {0,1,2,3}
			\draw (1*\x,0.5*\y) node (\x\y)[circle,fill,inner sep=1pt]{};
			
			\draw [thick] (00)--(11)--(22)--(33)--(53)--(62)--(73)--(83)--(92)--(101)--(110)--(120);
			
			\node at (8.5,1.25) [circle,fill=white,inner sep=3pt]{};
			
			\draw [thick] (01)--(10)--(30)--(41)--(52)--(63)--(72)--(82)--(93)--(113)--(122)--(131)--(140);
			\draw [thick] (02)--(12)--(21)--(31)--(40)--(70)--(81)--(91);
			\draw [thick] (03)--(23)--(32)--(42)--(51)--(71)--(80)--(90);
			
			\node at (13.5,0.25) [circle,fill=white,inner sep=3pt]{};
			\node at (11.5,1.25) [circle,fill=white,inner sep=3pt]{};
			\node at (10.5,0.25) [circle,fill=white,inner sep=3pt]{};
			\node at (0.5,0.25) [circle,fill=white,inner sep=3pt]{};
			\node at (1.5,0.75) [circle,fill=white,inner sep=3pt]{};
			\node at (2.5,1.25) [circle,fill=white,inner sep=3pt]{};
			\node at (3.5,0.25) [circle,fill=white,inner sep=3pt]{};
			\node at (4.5,0.75) [circle,fill=white,inner sep=3pt]{};
			\node at (5.5,1.25) [circle,fill=white,inner sep=3pt]{};
			\node at (6.5,1.25) [circle,fill=white,inner sep=3pt]{};
			\node at (7.5,0.25) [circle,fill=white,inner sep=3pt]{};
			\node at (8.5,0.25) [circle,fill=white,inner sep=3pt]{};
			\node at (9.5,0.75) [circle,fill=white,inner sep=3pt]{};
			\node at (10.5,1.25) [circle,fill=white,inner sep=3pt]{};
			\node at (11.5,0.25) [circle,fill=white,inner sep=3pt]{};
			\node at (12.5,0.75) [circle,fill=white,inner sep=3pt]{};
			\node at (13.5,1.25) [circle,fill=white,inner sep=3pt]{};
			
			\draw [thick] (00)--(11)--(22)--(33) (30)--(41)--(52)--(63) (62)--(73) (71)--(80)   (91)--(102)--(112)--(123)--(143)  (90)--(100)--(111)--(121)--(132)--(142)  (120)--(130)--(141) ;

			\node at (13.5,0.25) [circle,fill=white,inner sep=3pt]{};
			\node at (12.5,0.75) [circle,fill=white,inner sep=3pt]{};
			\node at (11.5,1.25) [circle,fill=white,inner sep=3pt]{};
			\node at (10.5,0.25) [circle,fill=white,inner sep=3pt]{};
			\node at (9.5,0.75) [circle,fill=white,inner sep=3pt]{};
			\node at (8.5,1.25) [circle,fill=white,inner sep=3pt]{};
			
			\draw [thick] (83)--(92)--(101)--(110)  (113)--(122)--(131)--(140) ;

			\node at (0.5,1.8){$\sigma_1$};
			\node at (1.5,1.8){$\sigma_2$};
			\node at (2.5,1.8){$\sigma_3$};
			\node at (3.5,1.8){$\sigma_1$};
			\node at (4.5,1.8){$\sigma_2$};
			\node at (5.5,1.8){$\sigma_3$};
			\node at (6.5,1.8){$\sigma_3$};
			\node at (7.5,1.8){$\sigma_1^{-1}$};
			\node at (8.5,1.8){$\sigma_3^{-1}$};
			\node at (9.5,1.8){$\sigma_2^{-1}$};
			\node at (10.5,1.8){$\sigma_1^{-1}$};
			\node at (11.5,1.8){$\sigma_3^{-1}$};
			\node at (12.5,1.8){$\sigma_2^{-1}$};
			\node at (13.5,1.8){$\sigma_1^{-1}$};
		\end{tikzpicture}\\
		\caption{$\tau^2\beta \tau^{-2}$}
		\label{figure 1}
	\end{figure}
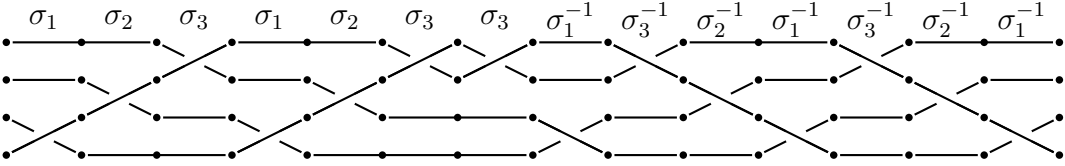
	
\end{proof}
\newpage
\begin{lemma}\label{l.2.2}  The braid $\Delta=\sigma_1 \sigma_2 \sigma_3\sigma_1\sigma_2\sigma_1$ (see Figure 8) is  the element of $B_4$, such that 
	\begin{equation} \label{12}
		\beta=\Delta^{-1}\tau^2, ~~\alpha=\tau^{-1}\Delta^{-1} \tau^3,~~\alpha^{-1}=\tau\Delta^{-1} \tau,~~\beta^{-1}=\tau^2 \Delta^{-1}.
	\end{equation} 
\end{lemma}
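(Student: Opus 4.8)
The plan is to prove the first identity $\beta=\Delta^{-1}\tau^{2}$ as a word identity in $B_4$, and then to obtain the other three identities of \eqref{12} by substituting this expression for $\beta$ into the relations already established in Lemma~\ref{l.2.1}. So the real content of the lemma is a single computation, and the rest is bookkeeping.

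For the first identity I would substitute the definitions $\Delta=\sigma_1\sigma_2\sigma_3\sigma_1\sigma_2\sigma_1$ and $\tau^{2}=\sigma_1\sigma_2\sigma_3\sigma_1\sigma_2\sigma_3$ and simplify
\[
\Delta^{-1}\tau^{2}=\sigma_1^{-1}\sigma_2^{-1}\sigma_1^{-1}\sigma_3^{-1}\sigma_2^{-1}\sigma_1^{-1}\cdot\sigma_1\sigma_2\sigma_3\sigma_1\sigma_2\sigma_3 .
\]
Here all six letters of $\Delta^{-1}$ cancel successively against the first five letters of $\tau^{2}$ by free reduction alone (no braid relation is needed), leaving $\sigma_1^{-1}\sigma_3$; since $\sigma_1$ and $\sigma_3$ are non-adjacent standard generators of $B_4$ they commute, so $\sigma_1^{-1}\sigma_3=\sigma_3\sigma_1^{-1}=\beta$. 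As an independent check one can verify the corresponding matrix identity $\rho(\Delta)^{-1}\bar T^{2}=B$ using \eqref{7}, \eqref{10} and the matrix of $\rho(\Delta)$ obtained from \eqref{5}.

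Given $\beta=\Delta^{-1}\tau^{2}$, I would then plug this into the three relations $\alpha=\tau^{-1}\beta\tau$, $\alpha^{-1}=\tau\beta\tau^{-1}$, $\beta^{-1}=\tau^{2}\beta\tau^{-2}$ of Lemma~\ref{l.2.1} and cancel the flanking powers of $\tau$: the first gives $\alpha=\tau^{-1}\Delta^{-1}\tau^{2}\tau=\tau^{-1}\Delta^{-1}\tau^{3}$, the second gives $\alpha^{-1}=\tau\Delta^{-1}\tau^{2}\tau^{-1}=\tau\Delta^{-1}\tau$, and the third gives $\beta^{-1}=\tau^{2}\Delta^{-1}\tau^{2}\tau^{-2}=\tau^{2}\Delta^{-1}$. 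This yields all of \eqref{12}.

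I do not expect a genuine obstacle here; the lemma is essentially a reformulation of Lemma~\ref{l.2.1} once the identity $\Delta^{-1}\tau^{2}=\beta$ is in hand. The only points that require care are keeping track of the order of multiplication (the paper uses left multiplication of matrices) when passing the identity between $B_4$ and $GL\!\left(3,\mathbb{Z}[t,t^{-1}]\right)$, and noting that $\Delta$ is the half-twist of $B_4$, so that $\Delta^{2}=\theta$ coincides with the generator of the center already appearing in \eqref{3} — a remark that is not needed for the proof itself but makes the subsequent normal-form discussion consistent.
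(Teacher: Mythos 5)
Your proposal is correct and follows essentially the same route as the paper: the paper also establishes $\beta=\Delta^{-1}\tau^{2}$ by writing out the word and cancelling down to $\sigma_1^{-1}\sigma_3=\sigma_3\sigma_1^{-1}$, and then deduces the remaining three identities of \eqref{12} from Lemma~\ref{l.2.1}. (Only a wording quibble: five, not six, letters of $\Delta^{-1}$ cancel, leaving the initial $\sigma_1^{-1}$ together with the final $\sigma_3$, exactly as your conclusion states.)
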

\begin{figure}[hbt!]
	\centering
	\begin{tikzpicture}[line width=.9pt, outer sep=0.5pt]
		\foreach \x in {0,1,2,3,4,5,6}
		\foreach \y in {0,1,2,3}
		\draw (1*\x,0.5*\y) node (\x\y)[circle,fill,inner sep=1pt]{};
		\draw  (01)--(10)--(20)--(30);
		
		\node at (4.5,0.75)[circle,fill=white,inner sep=3pt]{};
		\draw  (03)--(23)--(32)--(42)--(51)--(60);
		\draw  (02)--(12)--(21)--(31)--(40)--(50);
		
		\node at (0.5,0.25)[circle,fill=white,inner sep=3pt]{};
		\node at (1.5,0.75)[circle,fill=white,inner sep=3pt]{};
		\node at (2.5,1.25)[circle,fill=white,inner sep=3pt]{};
		\node at (5.5,0.25)[circle,fill=white,inner sep=3pt]{};
		\node at (4.5,0.75)[circle,fill=white,inner sep=3pt]{};
		\node at (3.5,0.25)[circle,fill=white,inner sep=3pt]{};
		\draw  (50)--(61);
		\draw  (50)--(61);
		\draw (30) -- (41)--(52)--(62);
		\draw  (00)--(11)--(22)--(33)--(43)--(53)--(63); 
		
		\node at (0.5,2.02){ {$\sigma_1$}};
		\node at (1.5,2.02){ {$\sigma_2$}};
		\node at (2.5,2.02){ {$\sigma_3$}};
		\node at (3.5,2.02){ {$\sigma_1$}};
		\node at (4.5,2.02){ {$\sigma_2$}};
		\node at (5.5,2.02){ {$\sigma_1$}};
		
	\end{tikzpicture}\\
	\caption{$\Delta=\sigma_1\sigma_2\sigma_3\sigma_1\sigma_2\sigma_1$}
	\label{figure 1}
\end{figure}
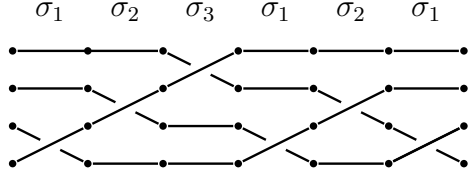
%\newpage
\begin{proof}  Let us show that $\beta=\Delta^{-1}\tau^2$ (see figure 9 and figure 10):
	\begin{equation} \label{13}
		\small
		\Delta^{-1}\tau^2=(\sigma_1^{-1}\sigma_2^{-1}\sigma_1^{-1} \sigma_3^{-1}\sigma_2^{-1}\sigma_1^{-1})(\sigma_1\sigma_2\sigma_3) (\sigma_1\sigma_2\sigma_3)=\sigma_1^{-1}\sigma_3=\sigma_3\sigma_1^{-1}=\beta
	\end{equation} 
	
	\begin{figure}[hbt!]
		\centering
		\begin{tikzpicture}[line width=.9pt, outer sep=1pt]
			\foreach \x in {0,1,2}
			\foreach \y in {0,1,2,3}
			\draw (1*\x,0.5*\y) node (\x\y)[circle,fill,inner sep=1pt]{};
			
			\draw [thick] (03)--(12)--(22);
			\draw [thick] (00)--(10)--(21);
			\node at (1.5,0.25) [circle,fill=white,inner sep=3pt]{};
			\node at (0.5,1.25) [circle,fill=white,inner sep=3pt]{};
			\draw [thick] (01)--(11)--(20);
			\draw [thick] (02)--(13)--(23);

			\node at (0.5,1.79) {$\sigma_3$};
			\node at (1.5,1.79) {$\sigma_1^{-1}$};   
		\end{tikzpicture}\\
		
		\caption{$\beta=\sigma_3\sigma_1^{-1}$}
		\label{figure 1}
	\end{figure}

	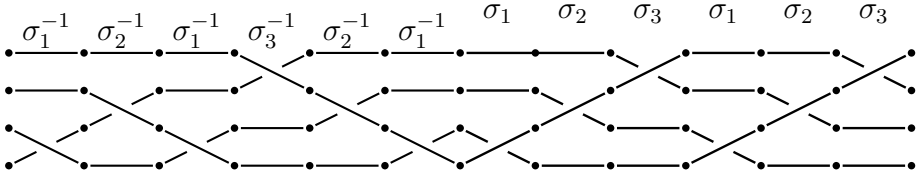
\begin{figure}[hbt!]
		\centering	
		\begin{tikzpicture}[line width=.9pt, outer sep=1pt]
			\foreach \x in {0,1,2,3,4,5,6,7,8,9,10,11,12}
			\foreach \y in {0,1,2,3}
			\draw (1*\x,0.5*\y) node (\x\y)[circle,fill,inner sep=1pt]{};
			
			\draw [thick] (00)--(11)--(22)--(32);
			\node at (0.5,0.25) [circle,fill=white,inner sep=3pt]{};
			\node at (1.5,0.75) [circle,fill=white,inner sep=3pt]{};
			\node at (2.5,1.25) [circle,fill=white,inner sep=3pt]{};
			\draw [thick] (01)--(10)--(20)--(31);
			\node at (2.5,0.25) [circle,fill=white,inner sep=3pt]{};
			\draw [thick] (02)--(12)--(21)--(30);
			\draw [thick] (03)--(13)--(23)--(33);
			
			\node at (0.5,1.8){$\sigma_1^{-1}$};
			\node at (1.5,1.8){$\sigma_2^{-1}$};
			\node at (2.5,1.8){$\sigma_1^{-1}$};
			
			\draw [thick] (30)--(40)--(50)--(61);

			\node at (5.5,1.25) [circle,fill=white,inner sep=3pt]{};
			\node at (5.5,0.25) [circle,fill=white,inner sep=3pt]{};
			\draw [thick] (31)--(41)--(52)--(62);
			\node at (4.5,0.75) [circle,fill=white,inner sep=3pt]{};
			\draw [thick] (32)--(43)--(53)--(63);
			\node at (3.5,1.25) [circle,fill=white,inner sep=3pt]{};
			
			\draw [thick] (33)--(42)--(51)--(60);
			
			\node at (3.5,1.8){$\sigma_3^{-1}$};
			\node at (4.5,1.8){$\sigma_2^{-1}$};
			\node at (5.5,1.8){$\sigma_1^{-1}$};

			\draw  (62)--(72)--(81)--(91);
			\draw  (61)--(70)--(80)--(90);
			\draw  (63)--(83)--(92);
			\node at (6.5,0.25)[circle,fill=white,inner sep=3pt]{};
			\node at (7.5,0.75)[circle,fill=white,inner sep=3pt]{};
			\node at (8.5,1.25)[circle,fill=white,inner sep=3pt]{};
			\draw (60) -- (71)--(82)--(93);
			
			\node at (6.5,2.02){ {$\sigma_1$}};
			\node at (7.5,2.02){ {$\sigma_2$}};
			\node at (8.5,2.02){ {$\sigma_3$}};
			
			\draw  (92)--(102)--(111)--(121);
			\draw  (91)--(100)--(110)--(120);
			\draw  (93)--(103)--(113)--(122);
			\node at (9.5,0.25)[circle,fill=white,inner sep=3pt]{};
			\node at (10.5,0.75)[circle,fill=white,inner sep=3pt]{};
			\node at (11.5,1.25)[circle,fill=white,inner sep=3pt]{};
			\draw (90) -- (101)--(112)--(123);
			
			\node at (9.5,2.02){ {$\sigma_1$}};
			\node at (10.5,2.02){ {$\sigma_2$}};
			\node at (11.5,2.02){ {$\sigma_3$}};
		\end{tikzpicture}\\
		\caption{$\Delta^{-1}\tau^2$}
		\label{figure 1}
	\end{figure}
	On the other hand, by Lemma	\ref{l.2.1} and the equality $\beta=\Delta^{-1}\tau^2$ implies other qualities  of \eqref{12}.
\end{proof}

\begin{theorem}\label{t.2.3}  The kernel $K$ of the Burau representation 
	\begin{equation} \label{14}
		\rho_4:B_4\to GL\left(3,\mathbb{Z}\left[t,t^{-1}\right]\right)
	\end{equation}
	is contained in the subgroup $G$ of $B_4$ generated by the elements $\tau$ and $\Delta$. Moreover, if the kernel $K$ is  not-trivial, then there exists a non-identity element $\omega \in K$ that can be written in the form	
	\begin{equation} \label{15}
		\omega = \theta^m \tau^2   \Delta\tau^{i_1}  \Delta \tau^{i_2} \dots \tau^{i_k}\Delta \tau^2, ~ m \in \mathbb{Z}, i_i\in \{1,2,3\}.
	\end{equation}
\end{theorem}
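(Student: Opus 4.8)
Here is the plan. The first assertion is immediate; the substance is in the second, and the one genuinely non-formal move is arranging the two flanking $\tau^2$'s.

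\smallskip
\emph{Containment $K\subseteq G$.} Since $K\subseteq\langle\alpha,\beta\rangle$ by \cite{7},\cite{6},\cite{4}, and by Lemma~\ref{l.2.2} both $\beta=\Delta^{-1}\tau^2$ and $\alpha=\tau^{-1}\Delta^{-1}\tau^3$ lie in $G=\langle\tau,\Delta\rangle$, we get $\langle\alpha,\beta\rangle\subseteq G$ and hence $K\subseteq G$. Throughout I use that $\tau^4=\Delta^2=\theta$ generates the centre $Z$ of $B_4$; in particular $\tau^4$ and $\Delta^2$ are central and $\Delta^{-1}=\theta^{-1}\Delta$.

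\smallskip
\emph{Choosing a good representative of a kernel element.} Fix $\omega\in K$, $\omega\ne1$; as $K$ is normal in $B_4$ I may replace $\omega$ by any conjugate. First, conjugating inside the free group $\langle\alpha,\beta\rangle$, I pass to a cyclically reduced word. It is not a power of $\alpha$ alone nor of $\beta$ alone (the matrices $A=\rho(\alpha)$, $B=\rho(\beta)$ of \eqref{6},\eqref{7} have non-constant trace $1-t-t^{-1}$, hence infinite order), so it uses both $\alpha^{\pm1}$ and $\beta^{\pm1}$. A short combinatorial argument then picks, among its cyclic rotations, one $\omega'=y_1y_2\cdots y_n$ ($n\ge2$, freely reduced, $y_j\in\{\alpha^{\pm1},\beta^{\pm1}\}$) with $y_1\ne\beta$ and $y_n\ne\beta^{-1}$: if no rotation worked, every letter $\ne\beta$ would be cyclically preceded by $\beta^{-1}$, and feeding this rule repeatedly from an $\alpha^{\pm1}$-letter forces all other letters to be $\beta^{-1}$ and then the rule to fail at that letter. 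Finally, conjugating once more by $\beta$, I set $\widehat\omega:=\beta^{-1}\,\omega'\,\beta\in K\setminus\{1\}$.

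\smallskip
\emph{Rewriting over $\tau,\Delta$ and normalizing.} By Lemma~\ref{l.2.2} each of $\alpha,\alpha^{-1},\beta,\beta^{-1}$ has the form $\tau^{a}\Delta^{-1}\tau^{b}$ with $(a,b)=(-1,3),(1,1),(0,2),(2,0)$ respectively; since also $\beta^{-1}=\tau^2\Delta^{-1}$ and $\beta=\Delta^{-1}\tau^2$, substituting the letters of $\omega'$ and concatenating gives
\[
\widehat\omega=\tau^2\,\Delta^{-1}\tau^{c_0}\,\Delta^{-1}\tau^{c_1}\cdots\Delta^{-1}\tau^{c_n}\,\Delta^{-1}\tau^2,
\]
where $c_0=a(y_1)$, $c_j=b(y_j)+a(y_{j+1})$ for $1\le j\le n-1$, and $c_n=b(y_n)$. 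Replacing each $\Delta^{-1}$ by $\theta^{-1}\Delta$, pulling the central $\theta$'s to the front, and reducing each $c_j$ modulo $4$ (extracting factors $\tau^4=\theta$) yields
\[
\widehat\omega=\theta^{m}\,\tau^2\,\Delta\tau^{i_0}\Delta\tau^{i_1}\cdots\Delta\tau^{i_n}\,\Delta\tau^2,\qquad i_j\in\{0,1,2,3\},\ i_j\equiv c_j\pmod4 .
\]
It remains to check $i_j\ne0$ for all $j$. For an interior index, inspecting the four values of $a,b$ shows $b(y_j)+a(y_{j+1})\equiv0\pmod4$ forces $y_jy_{j+1}$ to be one of $\alpha\alpha^{-1},\alpha^{-1}\alpha,\beta\beta^{-1},\beta^{-1}\beta$, impossible since $\omega'$ is reduced; and $c_0\equiv0$ only if $y_1=\beta$, $c_n\equiv0$ only if $y_n=\beta^{-1}$, both excluded by the choice of $\omega'$. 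Hence every $i_j\in\{1,2,3\}$, and, relabelling $i_0,\dots,i_n$ as $i_1,\dots,i_k$ with $k=n+1$, $\widehat\omega$ is a non-identity element of $K$ of the form \eqref{15}.

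\smallskip
\emph{Expected main obstacle.} The non-formal step is producing the two boundary factors $\tau^2$. Rewriting a freely reduced word in $\alpha,\beta$ directly leaves the outermost $\tau$-exponents equal to $a(y_1)$ and $b(y_n)$, and one cannot make both $\equiv2\pmod4$: that would need $y_1=\beta^{-1}$ and $y_n=\beta$, i.e.\ the cyclically reduced word to contain the cancelling pair $\beta\beta^{-1}$. Passing to $\widehat\omega=\beta^{-1}\omega'\beta$ circumvents this by \emph{inserting} the two required $\tau^2$'s together with one extra $\Delta^{-1}$ at each end; the remaining care is that after reducing exponents mod $4$ no interior exponent becomes $0$ — otherwise $\Delta\tau^0\Delta=\Delta^2=\theta$ would have to be absorbed and could shorten the word into the flanking $\tau^2$'s — and this is exactly what the reducedness case-analysis together with $y_1\ne\beta$, $y_n\ne\beta^{-1}$ prevents.
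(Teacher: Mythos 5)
Your proof is correct and follows essentially the same route as the paper's: write a kernel element as a word in the Gorin--Lin generators $\alpha^{\pm1},\beta^{\pm1}$, conjugate so that it begins with $\beta^{-1}$ and ends with $\beta$, substitute the expressions of Lemma~\ref{l.2.2}, and push the central factors $\tau^{4}=\Delta^{2}=\theta$ to the front. You additionally supply two details the paper leaves implicit, namely the combinatorial choice of a cyclic rotation that makes the conjugation by $\beta$ cancellation-free, and the mod-$4$ case check showing free reducedness forces every interior exponent into $\{1,2,3\}$.
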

\begin{proof} By the corollary 3.2 \cite{4} any kernel element can  be written as word in the Bokut–Vesnin (Gorin-Lin)  generators $\alpha,$ $\beta,$ $\alpha^{-1}$ and $\beta^{-1}$. Let  $\omega$ be a non-identity kernel element, written as an irreducible non-empty word in letters $\alpha,$ $\beta,$ $\alpha^{-1}$ and $\beta^{-1}$. We can assume that $\omega$ has the suffix $\beta$ and prefix $\beta^{-1}$.  If not we can conjugate it by some power of  $\beta$. In this case, by substitution $\beta=\Delta^{-1}\tau^2,$ $\alpha=\tau^{-1}\Delta^{-1} \tau^3,$ $\alpha^{-1}=\tau\Delta^{-1} \tau,$ $\beta^{-1}=\tau^2 \Delta^{-1}$  and using the property that $\tau^{4m}=\Delta^{2m}=\theta^m$ commutes all elements of $B_4$, we can reduce $\omega$ in the form \eqref{15}.
\end{proof}
\begin{corollary}\label{c.2.4}  The Burau representation for $n=4$ is  faithful if and only if the product of the matrices of the form
	\begin{equation} \label{16}
		t^{4m} \bar{T}^2  D \bar{T}^{i_1}  D \bar{T}^{i_2} \dots \bar{T}^{i_k} D\bar{T}^2, ~ m \in \mathbb{Z}, i_i\in \{1,2,3\}
	\end{equation}
	is not the identity matrix, where  $\bar{T}=\rho_4(\tau)$ and $D=\rho_4(\Delta)$. 
\end{corollary}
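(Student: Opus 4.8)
The plan is to obtain this as an immediate consequence of Theorem~\ref{t.2.3} by pushing the normal form \eqref{15} through the homomorphism $\rho_4$. First I would restate faithfulness of $\rho_4$ as triviality of its kernel $K$, so that the assertion becomes: $K\neq\{1\}$ if and only if some product of the shape \eqref{16} whose underlying braid is a non-trivial element of $B_4$ equals the identity matrix.

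For the forward implication, assume $K\neq\{1\}$. Theorem~\ref{t.2.3} supplies a non-identity $\omega\in K$ with
\[
\omega=\theta^m\,\tau^2\,\Delta\tau^{i_1}\,\Delta\tau^{i_2}\cdots\tau^{i_k}\,\Delta\,\tau^2,\qquad m\in\mathbb{Z},\ i_j\in\{1,2,3\}.
\]
Applying $\rho_4$, using that it is a homomorphism with $\bar T=\rho_4(\tau)$ and $D=\rho_4(\Delta)$, and using $\rho_4(\theta^m)=(t^4 I)^m=t^{4m}I$ (recorded in the Introduction; note that $t^{4m}I$ is a central scalar matrix in $GL(3,\mathbb{Z}[t,t^{-1}])$), one gets
\[
I=\rho_4(\omega)=t^{4m}\,\bar T^2\,D\,\bar T^{i_1}\,D\,\bar T^{i_2}\cdots\bar T^{i_k}\,D\,\bar T^2 ,
\]
i.e.\ a product of the form \eqref{16} equal to $I$, whose braid $\omega$ is non-trivial. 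Conversely, if a product of the form \eqref{16} with parameters $m,i_1,\dots,i_k$ is the identity matrix and the corresponding braid $\omega=\theta^m\tau^2\Delta\tau^{i_1}\cdots\tau^{i_k}\Delta\tau^2$ is non-trivial in $B_4$, then the same computation shows $\rho_4(\omega)=I$, so $\omega$ is a non-identity element of $K$ and $\rho_4$ is not faithful; in contrapositive form, if $\rho_4$ is faithful then no product \eqref{16} arising from a non-trivial braid can be the identity.

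All the real work sits in Theorem~\ref{t.2.3}; the residual points to be careful about are purely bookkeeping: matching the group-theoretic normal form \eqref{15} term-by-term with the matrix product \eqref{16} under $\rho_4$, and observing that the elements produced by Theorem~\ref{t.2.3} are genuinely non-trivial braids (they arise from reduced non-empty words in the free group $\langle\alpha,\beta\rangle$), so that the statement is correctly read as: $\rho_4$ is faithful iff no product \eqref{16} coming from a non-trivial braid of the shape \eqref{15} equals $I$. I do not expect any genuine obstacle here.
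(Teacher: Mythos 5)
Your argument is correct and is exactly the intended one: the paper states Corollary~\ref{c.2.4} without any separate proof, as an immediate consequence of Theorem~\ref{t.2.3} obtained by applying the homomorphism $\rho_4$ to the normal form \eqref{15} and using $\rho_4(\theta)=t^4I$. Your added caveat that the products \eqref{16} must be those arising from non-trivial braids (equivalently, from reduced non-empty words in the free group $\langle\alpha,\beta\rangle$) is a careful and accurate reading of what the statement must mean.
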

\begin{corollary}\label{c.2.5}  Let $G$ be the subgroup of $B_4$ generated by $\tau$ and $\Delta$ and $Z$ be the center, then a representation of $G/Z$ is given by:
	\begin{equation} \label{17}
		\langle \tau, \Delta | \tau^4=\Delta^2=1 \rangle,
	\end{equation}
	where $\tau=\sigma_1\sigma_2\sigma_3$ and $\Delta=\sigma_1\sigma_2\sigma_3\sigma_1\sigma_2\sigma_1$.
\end{corollary}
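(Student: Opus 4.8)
The plan is to prove that the natural surjection
$\psi\colon P:=\langle x,y\mid x^{4}=y^{2}=1\rangle=Z_{4}*Z_{2}\longrightarrow G/Z$, $x\mapsto\bar\tau$, $y\mapsto\bar\Delta$, is an isomorphism; the presentation \eqref{17} (equivalently, $G/Z\cong Z_{4}*Z_{2}$) follows at once. That $\psi$ is well defined and surjective is clear from $\tau^{4}=\Delta^{2}=\theta$ (used already in the proof of Theorem~\ref{t.2.3}) together with $G=\langle\tau,\Delta\rangle$. For injectivity I would realise both $P$ and $G/Z$ as split extensions of $\mathbb{Z}/4$ by a free group of rank $2$ carrying \emph{the same} action, and then invoke the five lemma.

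First I would describe the structure of $G$ itself. From Lemma~\ref{l.2.2} one has $\Delta=\tau^{2}\beta^{-1}$, so $G=\langle\tau,\Delta\rangle=\langle\tau,\beta\rangle$, and since $\beta\in F:=\langle\alpha,\beta\rangle$ the braid relations \eqref{11} show that $\tau$ normalises $F$ (indeed $\tau\alpha\tau^{-1}=\beta$, $\tau\beta\tau^{-1}=\alpha^{-1}$, $\tau^{-1}\alpha\tau=\beta^{-1}$, $\tau^{-1}\beta\tau=\alpha$), hence $F\triangleleft G$. As $\alpha,\beta$ have zero exponent sum in $B_{4}$ while $\tau^{k}$ has exponent sum $3k$, we get $F\cap\langle\tau\rangle=1$ and $G/F=\langle\bar\tau\rangle\cong\mathbb{Z}$, so $G=F\rtimes_{\varphi}\langle\tau\rangle$, where $F$ is free of rank $2$ by \cite{7} and $\varphi$ is the automorphism $\alpha\mapsto\beta$, $\beta\mapsto\alpha^{-1}$ of $F$ (it satisfies $\varphi^{4}=\mathrm{id}$, being the conjugation by the central element $\tau^{4}=\theta$). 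Factoring out the central subgroup $Z=\langle\theta\rangle=\langle\tau^{4}\rangle$ (note $Z\cap F=1$) gives a split short exact sequence
\[
1\longrightarrow F\longrightarrow G/Z\longrightarrow \mathbb{Z}/4\longrightarrow 1 ,
\]
with $\langle\bar\tau\rangle\cong\mathbb{Z}/4$ acting by $\varphi$, and $F$ embedding in $G/Z$.

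Next I would obtain the parallel decomposition of $P$. Consider the homomorphism $P\to\mathbb{Z}/4$, $x\mapsto1$, $y\mapsto2$. A Reidemeister–Schreier computation with the Schreier transversal $\{1,x,x^{2},x^{3}\}$ shows that its kernel $N$ is freely generated by $n_{0}:=yx^{-2}$ and $n_{1}:=xyx^{-3}$, the remaining two Schreier generators being $n_{0}^{-1}$ and $n_{1}^{-1}$ (using $x^{4}=y^{2}=1$), and that conjugation by $x$ sends $n_{0}\mapsto n_{1}$, $n_{1}\mapsto n_{0}^{-1}$ — i.e.\ it realises exactly the automorphism $\varphi$ under $n_{0}\leftrightarrow\alpha$, $n_{1}\leftrightarrow\beta$. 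Since $x$ has order $4$ in $P$ and $\langle x\rangle\cap N=1$, this is the split extension $1\to N\to P\to\mathbb{Z}/4\to1$ with action $\varphi$. Now $\psi$ intertwines the two projections onto $\mathbb{Z}/4$ (it sends $x,y$ to $\bar\tau,\bar\Delta$, of images $1,2$), hence restricts to $\psi|_{N}\colon N\to F$; using Lemma~\ref{l.2.2} and $\bar\tau^{4}=\bar\Delta^{2}=1$ one computes $\psi(n_{0})=\bar\Delta^{-1}\bar\tau^{2}=\bar\beta$ and $\psi(n_{1})=\bar\tau\bar\Delta^{-1}\bar\tau=\bar\alpha^{-1}$.

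To conclude, $\{\bar\beta,\bar\alpha^{-1}\}$ is again a free basis of the free group $F$, so $\psi|_{N}$ carries a free basis of $N$ to a free basis of $F$ and is an isomorphism. By the five lemma applied to the commutative ladder of the two short exact sequences over $\mathbb{Z}/4$ (the identity on the quotients and $\psi|_{N}$ on the kernels), $\psi$ is an isomorphism, establishing \eqref{17}. The step I expect to require the most care is the bookkeeping in the two semidirect–product decompositions — in particular, checking via Reidemeister–Schreier that the $\mathbb{Z}/4$–action on the free kernel of $P=Z_{4}*Z_{2}$ is precisely the automorphism $\varphi$ coming from the braid relations \eqref{11}; once the two actions are matched, the five lemma does the rest.
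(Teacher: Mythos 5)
The paper states Corollary \ref{c.2.5} without any proof, so there is no argument of the authors' to compare against; what you have written is a genuine, self-contained justification, and I believe it is correct. Your route is: realise both $P=Z_4*Z_2$ and $G/Z$ as extensions of $\mathbb{Z}/4$ by a free group of rank $2$ and apply the short five lemma. The inputs all check out. On the $G$ side: $G=\langle\tau,\beta\rangle$ by Lemma \ref{l.2.2}, the relations \eqref{11} show $\tau$ normalises $F=\langle\alpha,\beta\rangle$ with $\tau\alpha\tau^{-1}=\beta$, $\tau\beta\tau^{-1}=\alpha^{-1}$, and the exponent-sum homomorphism ($e(\alpha)=e(\beta)=0$, $e(\tau)=3$) gives $F\cap\langle\tau\rangle=1$, hence $G=F\rtimes\langle\tau\rangle$ and $G/Z\cong F\rtimes\mathbb{Z}/4$ since $Z=\langle\tau^4\rangle$ acts trivially. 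On the $P$ side, the Reidemeister--Schreier computation for the kernel $N$ of $x\mapsto1$, $y\mapsto2$ is correct ($N$ free on $yx^{-2}$, $xyx^{-3}$; the other Schreier generators are their inverses via $y^2=1$), and $\psi(yx^{-2})=\bar\Delta^{-1}\bar\tau^2=\bar\beta$, $\psi(xyx^{-3})=\bar\tau\bar\Delta^{-1}\bar\tau=\bar\alpha^{-1}$ by Lemma \ref{l.2.2}, which is a free basis of $F$ (Nielsen-equivalent to $\{\bar\alpha,\bar\beta\}$, and $F$ embeds in $G/Z$ because $F\cap Z=1$). Strictly speaking the five lemma needs only the commutativity of the ladder, which you verify via the compatible projections to $\mathbb{Z}/4$ ($\bar\Delta\equiv\bar\tau^2 \bmod F$), so the discussion of matching the $\mathbb{Z}/4$-actions is not logically necessary, though it is a useful sanity check. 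The one external fact you rely on beyond the paper's lemmas is that $\langle\alpha,\beta\rangle$ is free of rank $2$, which the paper itself imports from \cite{7}; given that, your proof is complete and fills a gap the paper leaves open.
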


%\subsection*{Acknowledgements}Place all thanks and grant acknowledgements here.

\end{document}